\newcommand{\blue}{\color{black}}
\newtheorem{theorem}{Theorem}[section]
\newtheorem{corollary}[theorem]{Corollary}
\newtheorem{lemma}[theorem]{Lemma}
\newtheorem{p}[theorem]{Proposition}
\theoremstyle{remark}
\newtheorem{remark}[theorem]{Remark}
\theoremstyle{definition}
\def\RR{\mathbb{R}}\def\R{\mathbb{R}}
\def\HH{\mathfrak{H}}
\def\ZZ{\mathbb{Z}}
\def\NN{\mathbb{N}}
\def\PP{\mathbb{P}}
\def\EE{\mathbb{E}}
\def\Var{\mathbb{V}\mathrm{ar}}
\def\Cov{\mathbb{C}\mathrm{ov}}
\def\ga{\gamma}
\def\de{\delta}
\def\ph{\varphi}
\def\wt{\widetilde}
\def\wh{\widehat}
\def\dh2l{\mathbf{d}_{\mathbb{H}_{2\ell}}}
\def\d2{d_2}
\def\dc{d_\mathrm{c}}
\newcommand{\bra}[1]{\left\lbrace#1\right\rbrace}
\newcommand{\norm}[1]{\left\lVert#1\right\rVert}
\newcommand{\op}[1]{\left\lVert#1\right\rVert_\mathrm{op}}
\begin{document}
\title{ Multivariate normal approximation on the Wiener space: \\ new bounds in the convex distance}
\author{Ivan Nourdin\thanks{University of Luxembourg, 
Department of Mathematics, inourdin@gmail.com}, Giovanni Peccati\thanks{University of Luxembourg, 
Department of Mathematics, giovanni.peccati@gmail.com} and Xiaochuan Yang\thanks{University of Bath, Department of Mathematical Sciences, xiaochuan.j.yang@gmail.com}}
\date{\small\today}
\maketitle

\begin{abstract} 
We establish explicit bounds on the convex distance between the distribution of a vector of smooth functionals of a Gaussian field, and that of a normal vector with a positive definite covariance matrix. Our bounds are commensurate to the ones obtained by Nourdin, Peccati and R\'eveillac (2010) for the (smoother) 1-Wasserstein distance, and do not involve any additional logarithmic factor. One of the main tools exploited in our work is a recursive estimate on the convex distance recently obtained by Schulte and Yukich (2019). We illustrate our abstract results in two different situations: (i) we prove a quantitative multivariate fourth moment theorem for vectors of multiple Wiener-It\^o integrals, and (ii) we characterise the rate of convergence for the finite-dimensional distributions in the functional Breuer-Major theorem.

\smallskip

\noindent{\bf Keywords:} Breuer-Major Theorem; Convex Distance; Fourth Moment Theorems; Gaussian Fields; Malliavin-Stein Method; Multidimensional Normal Approximations.

\smallskip

\noindent{\bf AMS 2020 Classification:} 60F05; 60G15; 60H07
\end{abstract}

\section{Introduction}

Fix $m\geq 1$, and consider random vectors ${\bf F}$ and ${\bf G}$ with values in $\R^m$. The {\bf convex distance} between the distributions of ${\bf F}$ and ${\bf G}$ is defined as 
\begin{equation}\label{e:cd}
\dc(\mathbf F, \mathbf G) := \sup_{h\in\mathcal I_m} \big|\EE h(\mathbf F)  -\EE h(\mathbf G)\big|,
\end{equation}
where the supremum runs over the class $\mathcal I_m$ of indicator functions of the measurable convex subsets of $\RR^m$. For $m\geq 2$, the distance $d_c$ represents a natural counterpart to the well-known {\bf Kolmogorov distance} on the class of probability distributions on the real line, and enjoys a number of desirable invariance properties that make it well-adapted to applications\footnote{\blue For instance, one has that $ \dc( T  \mathbf F, T\mathbf G) = \dc(\mathbf F, \mathbf G)$, whenever the mapping $T : \RR^m \longrightarrow \RR^m$ is an invertible affine mapping --- see e.g. \cite{bentkus, R19} for more details.}.
 
 \smallskip
 
 The aim of the present note is to establish explicit bounds on the quantity $\dc(\mathbf F, \mathbf G)$, in the special case where ${\bf F}$ is a vector of smooth functionals of an infinite-dimensional Gaussian field, and ${\bf G}  = N_\Sigma$ is a $m$-dimensional centered Gaussian vector with covariance $\Sigma>0$. Our main tool is the so-called {\bf Malliavin-Stein method} for probabilistic approximations \cite{NP}, that we will combine with some powerful recursive estimates on $d_c$, recently derived in \cite{SY19} in the context of multidimensional second-order Poincar\'e inequalities on the Poisson space --- see Lemma \ref{l:SY2.4} below. 
 
 \smallskip
 
 Multidimensional normal approximations in the convex distance have been the object of an intense study since several decades, mostly in connection with multivariate central limit theorems (CLTs) for sums of independent random vectors --- see e.g. \cite{bentkus, G, nagaev, nazarov} for some classical references, as well as \cite{R19} for recent advances and for a discussion of further relevant literature. The specific challenge we are setting ourselves in the present work is to establish bounds on the quantity $\dc({\bf F}, N_{\Sigma})$ that coincide (up to an absolute multiplicative constant) with the bounds deduced in \cite{NPR10} on the {\bf 1-Wasserstein distance}
\begin{equation} \label{e:defwass}
d_W({\bf F}, N_{\Sigma}) := \sup_{h\in {\rm Lip}(1)} \left| \EE h(\mathbf F) - \EE h(N_\Sigma)\right|,
\end{equation}
where ${\rm Lip}(1)$ denotes the class of $C^1$ mappings $h:\RR^m\to\R$ with Lipschitz constant not exceeding 1. We will see that our estimates systematically improve the bounds that one can infer from the general inequality
\begin{equation} \label{e:conwass}
d_c({\bf F}, N_{\Sigma}) \leq K \, \sqrt{d_W({\bf F}, N_{\Sigma})},
\end{equation}
where $K$ is an absolute constant uniquely depending on $\Sigma$. For the sake of completeness, a full proof of \eqref{e:conwass} is presented in Appendix \ref{s:conwass}, where one can also find more details on the constant $K$.

\begin{remark}{\rm In order for the quantity $d_W({\bf F}, N_{\Sigma})$ to be well-defined, one needs that $\EE\|{\bf F}\|_{\RR^m} < \infty$. In Appendix \ref{s:conwass} we will also implicitly use the well-known representation
$$
d_W({\bf F}, N_{\Sigma})  = \inf_{ ({\bf U},{\bf V}) } \EE\left\|  {\bf U} - {\bf V} \right\|_{\RR^m},
$$
where the infimum runs over all couplings $({ \bf U},{\bf V})$ of ${\bf F}$ and $N_\Sigma$. See \cite[Ch. I-6]{V} for a proof of this fact, and for further relevant properties of Wasserstein distances.
}
\end{remark}

The main contributions of our paper are described in full detail in Section \ref{ss:intromain} and Section \ref{ss:introapp}. Section \ref{ss:intromall} contains some elements of Malliavin calculus that are necessary in order to state our findings. Section \ref{ss:introd2} discusses some estimates on the smooth distance $d_2$ (to be defined therein) that can be obtained by {interpolation techniques}, whereas Section \ref{ss:introwass} provides an overview of the main results of \cite{NPR10}.

\smallskip

\noindent{\it Remark on notation.} From now on, every random element is assumed to be defined on a common probability space $(\Omega, \mathscr{F}, \PP)$, with $\EE$ denoting expectation with respect to $\PP$. For $p\geq 1$, we write $L^p(\Omega) := L^p (\Omega, \mathscr{F}, \PP)$.

\subsection{Elements of Malliavin calculus}\label{ss:intromall}

The reader is referred e.g. to the monographs \cite{NP, nualartbook, eulalia} for a detailed discussion of the concepts and results presented in this subsection. 

\smallskip

 Let $\frak H$ be a real separable Hilbert space, and write $\langle \cdot, \cdot \rangle_\frak{H}$ for the corresponding inner product. In what follows, we will denote by $X=\{X(h) : h\in\frak H\}$ an {\bf isonormal Gaussian process} over $\frak H$, that is, $X$ is a centered Gaussian family indexed by the elements of $\frak H$ and such that $\EE [X(h)X(g)] = \langle h,g\rangle_\frak{H}$ for every $h,g \in \frak H$. For the rest of the paper, we will assume without loss of generality that $\mathscr{F}$ coincides with the $\sigma$-field generated by $X$.
 
\smallskip

Every $F\in L^2(\Omega)$ admits a {\bf Wiener-It\^o chaos expansion} of the form
\begin{align*}
F = \EE F +  \sum_{q=1}^\infty I_q(f_q),
\end{align*}
where $f_q$ belongs to the symmetric $q$th tensor product $\frak H^{\odot q}$ (and is uniquely determined by $F$), and  $I_q(f_q)$ is the $q$-th {\bf multiple Wiener-It\^o integral} of $f_q$ with respect to $X$. One writes $F\in \mathbb D^{1,2}(\Omega)$ if 
\begin{align*}
\sum_{q\ge 1} q q!\norm{f_q}_{\frak{H}^{\otimes q}}^2<\infty.
\end{align*}
For $F\in \mathbb D^{1,2}(\Omega)$, we denote by $DF$ the {\bf Malliavin derivative} of $F$, and recall that $DF$ is by definition a random element with values in $\frak H$. The operator $D$ satisfies a fundamental {\bf chain rule}: if $\varphi:\RR^m\to\RR$ is $C^1$ and has bounded derivatives and if $F_1,\dots,F_m\in\mathbb D^{1,2}(\Omega)$, then $\varphi(F_1,\ldots,F_m)\in\mathbb D^{1,2}(\Omega)$ and 
\begin{equation}\label{chainrule}
D\varphi(F_1,\ldots,F_m) = \sum_{i=1}^m \partial_i\varphi(F_1,\ldots,F_m)DF_i.
\end{equation}
For general $p>2$, one writes $F\in\mathbb D^{1,p}(\Omega)$ if $F\in L^p(\Omega)\cap \mathbb D^{1,2}(\Omega) $ and $\EE[\norm{DF}_\frak{H}^p]<\infty$.  

\smallskip

The adjoint of $D$, customarily called the {\bf divergence operator} or the {\bf Skorohod integral}, is denoted by $\de$ and satisfies the duality formula,
\begin{align}\label{duality}
\EE[\de(u)F] = \EE[\langle u, DF\rangle_\frak{H}]
\end{align} 
for all $F\in\mathbb D^{1,2}(\Omega)$, whenever $u: \Omega\to \frak H$ is in the domain ${\rm Dom}(\delta)$ of $\de$. 

\smallskip

The {\bf generator of the Ornstein-Uhlenbeck semigroup}, written $L$, is defined by the relation
$
L F= - \sum_{q\ge 1} q I_q(f_q)
$
for every $F$ such that $\sum_{q\ge 1} q^2 q! \norm{f_q}_{\frak H^{\otimes q}}^2<\infty$. The {\bf pseudo-inverse} of $L$, denoted by $L^{-1}$, is the operator defined for any $F\in L^2(\Omega)$ as
$
L^{-1} F = - \sum_{q\ge 1} \frac{1}{q} I_q(f_q). 
$
The crucial relation that links the objects introduced above is the identity
\begin{align}
\label{e:relation}
F= \EE F -\delta(DL^{-1}F),
\end{align}
which is valid for any $F\in L^2(\Omega)$ (in particular, one has that, for every $F\in L^2(\Omega)$, $DL^{-1}F \in {\rm Dom} (\delta)$).

\subsection{Bounds on the smooth distance $d_2$}\label{ss:introd2}

Fix $m\geq 1$ and assume that $\mathbf F=(F_1,...,F_m)$ is a centered random vector in $\RR^m$ whose components belong to $\mathbb D^{1,2}(\Omega)$. 
Without loss of generality, we can assume that each $F_i$ is of the form $F_i=\delta(u_i)$ for some $u_i\in{\rm Dom}(\delta)$; indeed, by virtue of (\ref{e:relation}) one can always set $u_i=-DL^{-1}F_i$ (although this is by no means the only possible choice). Let also $N_\Sigma=(N_1,\ldots,N_m)$ be a centered Gaussian vector with invertible $m\times m$ covariance matrix $\Sigma= \{\Sigma(i,j) \} =  \{\Sigma(i,j) : i,j = 1,...,m \}$.
Finally, consider the so-called {\bf $d_2$ distance} (between the distributions of ${\bf F}$ and $N_\Sigma$) defined by
$$d_2(\mathbf F, N_\Sigma) = \sup_{h}\big|\EE h(\mathbf F)  -\EE h(N_\Sigma)\big|,$$
where the supremum is taken over all $C^2$ functions $h:\RR^m\to\R$  that are {\blue 1-Lipschitz and} such that 
$\sup_{{\bf x}\in\RR^m}\|({\rm Hess}\,h)({\bf x})\|_{\rm H.S.}\leq 1$;
here, ${\rm Hess}\, h$ stands for the Hessian matrix of $h$, whereas $\|\cdot \|_{\rm H.S.}$ \textcolor{black}{(resp. $\langle\cdot,\cdot\rangle_{\rm H.S.}$)} denotes the Hilbert-Schmidt norm \textcolor{black}{(resp. scalar product)}, that is, \textcolor{black}{$\langle A,B\rangle_{\rm H.S.}={\rm Tr(AB^T)}=\sum_{1\leq i,j\leq m}A(i,j)B(i,j)$ and $\|A\|^2_{\rm H.S.}=\langle A,A\rangle_{\rm H.S.}$ for any $m\times m$ matrices $A=\{A(i,j)\}$ and $B=\{B(i,j)\}$}.

\smallskip

For a given $h:\R^m\to\R\in C^2$ with bounded partial derivatives, let us introduce its mollification at level $\sqrt{t}$,  defined by
\begin{align}\label{e:h_t}
h_t(\mathbf x) :=  \EE[h(\sqrt{t} N_\Sigma + \sqrt{1-t} \mathbf x)],\quad {\bf x}\in\R^m.
\end{align} 
One has
\begin{eqnarray*}
&&\EE h(N_\Sigma) - \EE h(\mathbf F) = \int_0^1 \frac{d}{dt} \EE[h_t( {\bf F})]dt\\
&=&\frac12\sum_{i=1}^m \int_0^1 \left\{\frac{1}{\sqrt{t}}\EE [\partial_i h(\sqrt{t}N_\Sigma + \sqrt{1-t} \mathbf F)N_i]
- \frac{1}{\sqrt{1-t}}\EE [\partial_i h(\sqrt{t}N_\Sigma + \sqrt{1-t} \mathbf F)F_i]\right\}dt.
\end{eqnarray*} 
\textcolor{black}{Supposing in addition (and without loss of generality) that ${\mathbf F}$ and $N_\Sigma$ are independent, we can write, using the Gaussian integration by parts
\begin{eqnarray*}
\EE [\partial_i h(\sqrt{t}N_\Sigma + \sqrt{1-t} \mathbf F)N_i]&=&E^{\mathbf F} \big(E^{N_\Sigma}[\partial_i h(\sqrt{t}N_\Sigma + \sqrt{1-t} \mathbf F)N_i]\big)\\
&=&\sqrt{t}\sum_{j=1}^m  \EE [\partial^2_{ij} h(\sqrt{t}N_\Sigma + \sqrt{1-t} \mathbf F)]\EE[N_iN_j],
\end{eqnarray*} 
and, combining the duality formula (\ref{duality}) with the chain rule (\ref{chainrule}),
\begin{eqnarray*}
\EE [\partial_i h(\sqrt{t}N_\Sigma + \sqrt{1-t} \mathbf F)F_i]&=&E^{N_\Sigma} \big(E^{\mathbf F}[\partial_i h(\sqrt{t}N_\Sigma + \sqrt{1-t} \mathbf F)F_i]\big)\\
&=&\sqrt{1-t}\sum_{j=1}^m  \EE [\partial^2_{ij} h(\sqrt{t}N_\Sigma + \sqrt{1-t} \mathbf F)\langle DF_i,u_j\rangle_{\HH}].
\end{eqnarray*} 
Putting everything together leads to }
$$\EE h(N_\Sigma) - \EE h(\mathbf F) =\frac12 \int_0^1 \EE[\langle({\rm Hess}\,h)(\sqrt{t}N_\Sigma + \sqrt{1-t} \mathbf F), \Sigma - M_F\rangle_{\rm H.S.}]dt,
$$
where $M_F$ is the random $m\times m$ matrix given by 
\begin{equation}\label{mf}
M_F(i,j)=\langle DF_i,u_j\rangle_{\HH}.
\end{equation}
It is 
then immediate  that 
\begin{equation}\label{e:bound_d2}
d_2(\mathbf F, N_\Sigma) \leq \frac12\,\EE\|M_F-\Sigma\|_{\rm H.S.}.
\end{equation}
Inequalities in the spirit of \eqref{e:bound_d2} were derived e.g. in \cite{NPRei10} (in the context of limit theorems for homogeneous sums) and \cite{PZ} (in the framework of multivariate normal approximations on the Poisson space) --- see also \cite{SY19} and the references therein.

\subsection{Bounds on the 1-Wasserstein distance}\label{ss:introwass}

For random vectors ${\bf F}$ and $N_\Sigma$ as in the previous section, we will now discuss a suitable method for assessing the quantity $d_W(\mathbf F,N_\Sigma)$ defined in \eqref{e:defwass}, that is, for uniformly bounding the absolute difference $ | \EE h(\mathbf F) - \EE h(N_\Sigma) | $ over all 1-Lipschitz functions $h$ of class $C^1$.  

\smallskip

Since we do not assume  $h$ to be twice differentiable, the method presented in Section \ref{ss:introd2} no longer works.
A preferable approach is consequently the so-called `Malliavin-Stein method', introduced in \cite{NP09} in dimension 1, and later extended to the multivariate setting in \cite{NPR10}. 
Let us briefly recall how this works (see \cite[Chapter 4 and Chapter 6]{NP} for a full discussion, and \cite{W} for a constantly updated list of references). 

\smallskip

Start by considering the following {\bf Stein's equation}, with $h:\RR^m\to\RR$ given and  $f:\RR^m\to\RR$ unknown:
\begin{align}
\label{e:SE}
\sum_{i,j=1}^m \Sigma(i,j)\partial^2_{ij} f(\mathbf x) -\sum_{i=1}^m x_i \partial_i f(\mathbf x) = h(\mathbf x) - \EE h(N_\Sigma), \quad {\bf x} \in \RR^m.
\end{align}
When $h\in C^1$ has bounded partial derivatives, it turns out that \eqref{e:SE} admits a solution $f=f_h$ of class $C^2$ and whose second partial derivatives are bounded --- see e.g. \cite[Proposition 4.3.2]{NP} for a precise statement. Taking expectation with respect to the distribution of $\mathbf F$ in \eqref{e:SE} gives
\begin{align*}
\EE h(\mathbf F) - \EE h(N_\Sigma) &= \sum_{i,j=1}^m \Sigma(i,j) \EE[\partial^2_{ij} f_h(\mathbf F)] -\sum_{i=1}^m \EE[F_i \partial_i f_h(\mathbf F)].
\end{align*}
We can apply again the duality formula (\ref{duality}) together with the chain rule (\ref{chainrule}), to deduce that
\begin{equation}\label{e:h_t2}
\EE h(\mathbf F) - \EE h(N_\Sigma) 
= \EE[\langle({\rm Hess}\,f_h)(\mathbf F),M_F-\Sigma \rangle_{\rm H.S.}],
\end{equation}
where $M_F$ is defined in (\ref{mf}). Taking the supremum over the set of all 1-Lipschitz functions $h:\RR^m\to\RR$ of class $C^1$, we infer
\begin{align}\label{e:bound_W1}
d_W(\mathbf F,N_\Sigma)  \leq c_1\, \EE\|M_F-\Sigma\|_{\rm H.S.},
\end{align}
with 
\begin{equation}\label{e:wc}
c_1= \sup_{h\in {\rm Lip}(1)}\sup_{{\bf x}\in\RR^m}\|({\rm Hess}\,f_h)({\bf x})\|_{\rm H.S.}\leq \sqrt{m}  \, \|\Sigma^{-1}\|_{\rm op} \, \|\Sigma\|_{\rm op}^{1/2},
\end{equation}
and $\op{\cdot}$ is the operator norm for $m\times m$ matrices. The estimate \eqref{e:bound_W1} is the main result of \cite{NPR10} (see also \cite[Theorem 6.1.1]{NP}), whereas a self-contained proof of \eqref{e:wc} can be found in \cite[Proposition 4.3.2]{NP}. 

\subsection{Main results: bounds on the convex distance}\label{ss:intromain}

The principal aim of the present paper is to address the following natural question: {\it can one obtain a bound similar to {\rm (\ref{e:bound_W1})} for distances based on {\bf non-smooth} test functions $h:\RR^m\to\RR$, such as e.g.
{\it indicator functions} of measurable convex subsets of $\RR^m$? }

\smallskip

If $h$ is such an indicator function, then we recall e.g. from \cite[Lemma 2.2]{SY19} that, for all $t\in(0,1)$,
\begin{align*}
\big|\EE h(\mathbf F) - \EE h(N_\Sigma) \big| \le \frac{4}{3} |\EE h_t(\mathbf F) -\EE h_t(N_\Sigma) | + \frac{20m}{\sqrt{2}}\frac{\sqrt{t}}{1-t},
\end{align*}
where $h_t$ stands for the mollification at level $\sqrt{t}$ of $h$, as defined in (\ref{e:h_t}).
Let $f_t=f_{h_t}$ be the solution of the Stein's equation (\ref{e:SE}) associated with $h=h_t$. In \cite{CGS} (see also \cite{SY19}), it is shown that
\begin{align*}
\max_{1\leq i,j\leq m}\sup_{{\bf x}\in\RR^m} |\partial^2_{ij} f_t(\mathbf x) | &\le c_2  |\log t|,
 \end{align*} 
 with $c_2=c_2(m,\Sigma)$ a constant depending only on $m$ and $\Sigma$.
 Combining such an estimate \textcolor{black}{with (\ref{e:h_t2})} yields the existence of a constant $c_3=c_3(m,\Sigma)>0$ such that
 \begin{equation}\label{smoothing}
 \big|\EE h(\mathbf F) - \EE h(N_\Sigma) \big| \le c_3\left( \EE\|M_F-\Sigma\|_{\rm H.S.} |\log t|+ \frac{\sqrt{t}}{1-t}\right).
 \end{equation}
 From (\ref{smoothing}),
 it is straightforward to deduce the existence of $c_4=c_4(m,\Sigma)>0$  such that
 \begin{equation}\label{easy}
\dc(\mathbf F,N_\Sigma)  \le c_4\, \EE\|M_F-\Sigma\|_{\rm H.S.}\,\big|\log \{  \EE\|M_F-\Sigma\|_{\rm H.S.}\}\big|.
 \end{equation}
Comparing (\ref{easy}) with (\ref{e:bound_d2}) and \eqref{e:bound_W1} shows that such a strategy yields a bound on $\dc(\mathbf F,N_\Sigma)$ differing from those deduced above for the distances $d_2$ and $d_W$ by an additional logarithmic factor. See also \cite{KP15,NPS14} for more inequalities analogous to (\ref{easy})  --- that is, displaying a multiplicative logarithmic factor --- related respectively to the (multivariate) Kolmogorov and total variation distances.

 \medskip

In this paper, we will show that one can actually remove the redundant logarithmic factor
on the right-hand side of (\ref{easy}), thus yielding a bound on $\dc({\bf F}, N_\Sigma)$ that is commensurate to (\ref{e:bound_d2}) and \eqref{e:bound_W1}  (with moreover an explicit multiplicative constant). Our main result is the following:

\begin{theorem}\label{t} Let $\mathbf F=(F_1,...,F_m)=(\delta(u_1),\ldots,\delta(u_m))$ be a vector  in $\R^m$ of centered random variables such that $u_i\in{\rm Dom}(\delta)$, for $i=1,...,m$. 
Let also $N_\Sigma=(N_1,\ldots,N_m)$ be a centered Gaussian vector with invertible $m\times m$ covariance matrix $\Sigma=\{\Sigma(i,j)\}$.
Then
$$\dc(\mathbf F,N_\Sigma)\le 402 \Big(\op{\Sigma^{-1}}^{3/2}+1\Big) m^{41/24} \, \textcolor{black}{\sqrt{\EE\big[\|M_F-\Sigma\|_{\rm H.S.}^2\big]}},$$
with $M_F$ defined in {\rm (\ref{mf})}.
\end{theorem}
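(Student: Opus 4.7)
The plan is to combine the Malliavin-Stein identity (\ref{e:h_t2}) with the Schulte-Yukich recursive estimate (Lemma 2.4 below in the excerpt), optimizing at the end over the mollification scale $\sqrt{t}$. The heuristic is to avoid the deterministic $|\log t|$ bound on the second derivatives of the Stein solution (which led to the lossy inequality (\ref{easy})), replacing it by an $L^2$-type estimate that is itself allowed to depend on $\dc(\mathbf F,N_\Sigma)$, thereby producing a self-improving inequality.

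\textbf{Step 1 (smoothing and Stein identity).} Fix a measurable convex $K\subset\R^m$, set $h=\mathbf 1_K$, and let $h_t$ be its Gaussian mollification (\ref{e:h_t}). The smoothing inequality quoted just before (\ref{smoothing}) gives
$$
|\EE h(\mathbf F)-\EE h(N_\Sigma)|\le \frac{4}{3}\,|\EE h_t(\mathbf F)-\EE h_t(N_\Sigma)|+\frac{20m}{\sqrt 2}\cdot\frac{\sqrt t}{1-t}.
$$
Since $h_t\in C^2$ with bounded derivatives, the Stein equation (\ref{e:SE}) applied to $h_t$, together with the duality formula (\ref{duality}) and chain rule (\ref{chainrule}), yields exactly (\ref{e:h_t2}):
$$
\EE h_t(\mathbf F)-\EE h_t(N_\Sigma)=\EE\big[\langle (\hess f_t)(\mathbf F),\,M_F-\Sigma\rangle_{\mathrm{H.S.}}\big].
$$

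\textbf{Step 2 (Cauchy-Schwarz and the pivotal Hessian bound).} Applying Cauchy-Schwarz gives
$$
|\EE h_t(\mathbf F)-\EE h_t(N_\Sigma)|\le \sqrt{\EE\,\|(\hess f_t)(\mathbf F)\|_{\mathrm{H.S.}}^2}\cdot\sqrt{\EE\,\|M_F-\Sigma\|_{\mathrm{H.S.}}^2}.
$$
The core step is to bound the first factor \emph{not} by a deterministic constant of order $|\log t|$, but by a quantity of order $1/\sqrt t$ that involves $\dc(\mathbf F,N_\Sigma)$ itself. This is the content of the Schulte-Yukich Lemma 2.4: writing $(\hess f_t)(\mathbf x)$ by differentiating the Mehler-type representation of $f_t$ under the expectation in (\ref{e:h_t}) produces an integrand of the form $h$ evaluated at $\sqrt t N_\Sigma+\sqrt{1-t}\,\mathbf x$ times a Hermite-polynomial factor in $N_\Sigma$ (of mass $1/t$); integrating the square at $\mathbf x=\mathbf F$ and comparing with the corresponding expression at $\mathbf x=N_\Sigma'$ (independent) reduces to evaluating $h\cdot h'$ on a family of convex sets, which is controlled by $\dc(\mathbf F,N_\Sigma)$. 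This delivers an estimate of the shape
$$
\EE\,\|(\hess f_t)(\mathbf F)\|_{\mathrm{H.S.}}^2\le \frac{C_1(m,\Sigma)}{t}\big(1+\dc(\mathbf F,N_\Sigma)\big).
$$

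\textbf{Step 3 (closing the recursion).} Inserting Step 2 into Step 1 and taking the supremum over all convex $K$ gives
$$
\dc(\mathbf F,N_\Sigma)\le \frac{A}{\sqrt t}\sqrt{1+\dc(\mathbf F,N_\Sigma)}\cdot\gamma+\frac{B\sqrt t}{1-t},
$$
where $\gamma:=\sqrt{\EE\,\|M_F-\Sigma\|_{\mathrm{H.S.}}^2}$ and $A,B$ are explicit in $m$ and $\Sigma$. One then optimizes $t\in(0,1)$ so as to balance the two right-hand terms at order $\gamma$; this turns the inequality into a quadratic condition on $\sqrt{\dc(\mathbf F,N_\Sigma)}$, which one solves explicitly. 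The resulting bound is linear in $\gamma$, with no logarithmic correction, which is exactly the form of Theorem \ref{t}.

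\textbf{Step 4 (explicit constants).} Tracking the constants requires: the bounds from \cite[Proposition 4.3.2]{NP} for derivatives of Stein solutions (responsible for the factor $\op{\Sigma^{-1}}^{3/2}$ coming from applying Proposition 4.3.2 at the second-derivative level and comparing operator with Hilbert-Schmidt norms); the Schulte-Yukich smoothing and Hessian estimates (from which the $m$-dependence in $C_1$ and in the remainder $B\sqrt t/(1-t)$ is tallied); and the final optimization in $t$ (which combines these powers of $m$ into the exponent $41/24$). No step is conceptually difficult here; the subtlety lies in arithmetic bookkeeping, and the awkward exponent $41/24$ is the price paid for that bookkeeping. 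The principal technical obstacle remains Step 2: the reproduction (or invocation) of Lemma 2.4 of \cite{SY19}, which is precisely what enables the recursive closure and removes the spurious logarithm in (\ref{easy}).
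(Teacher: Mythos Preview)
Your outline has the right general architecture (smoothing, Stein identity, Cauchy--Schwarz, recursive closure), but Step~2 misstates the Schulte--Yukich estimate, and that error propagates fatally into Step~3. The lemma actually used in the paper (their Proposition~2.3, quoted here as Lemma~\ref{l:SY2.4}) reads
\[
\sup_{h\in\mathcal I_m}\ \EE\sum_{i,j=1}^m |\partial^2_{ij} f_t(\mathbf F)|^2 \ \le\ \op{\Sigma^{-1}}^{2}\Big(m^2(\log t)^2\,\dc(\mathbf F,N_\Sigma)+530\,m^{17/6}\Big),
\]
i.e.\ a \emph{bounded} additive term plus a $(\log t)^2$-weighted term in~$\kappa:=\dc(\mathbf F,N_\Sigma)$. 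Your version, $\EE\|\hess f_t(\mathbf F)\|_{\mathrm{H.S.}}^2\le C_1 t^{-1}(1+\kappa)$, is both the wrong power of~$t$ and, crucially, blows up in the $\kappa$-independent part as $t\to 0$. With your form the recursion becomes
\[
\kappa\ \le\ \frac{A}{\sqrt t}\,\sqrt{1+\kappa}\,\gamma+\frac{B\sqrt t}{1-t},
\]
and balancing the two terms forces $t$ of order $\gamma$, giving at best $\kappa=O(\sqrt{\gamma})$ --- worse than the logarithmic bound~(\ref{easy}) you are trying to beat. No optimization in~$t$ or quadratic manipulation in $\sqrt{\kappa}$ rescues this: the claim in your Step~3 that the two terms can be balanced ``at order $\gamma$'' is simply false for this recursion.

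With the correct lemma the recursion is
\[
\kappa\ \le\ \tfrac{4}{3}\op{\Sigma^{-1}}\big(m|\log t|\sqrt{\kappa}+24\,m^{17/12}\big)\gamma+\tfrac{20m}{\sqrt 2}\,\frac{\sqrt t}{1-t},
\]
and the paper chooses $t=\gamma^2$, so that both the smoothing remainder and the $\kappa$-free part of the Hessian term are $O(\gamma)$. The dangerous piece $m|\log t|\sqrt{\kappa}\,\gamma=2m|\log\gamma|\sqrt{\kappa}\,\gamma$ is handled by a two-pass bootstrap: first use the trivial bound $\kappa\le 1$ to get $\kappa\le C\gamma|\log\gamma|$, then feed this back so that $|\log\gamma|\sqrt{\kappa}\le C'\gamma^{1/2}|\log\gamma|^{3/2}\le 4C'$ (using $\sup_{x\in(0,1/e]}x^{1/2}|\log x|^{3/2}\le 4$), and finally re-insert to obtain $\kappa\le C''\gamma$. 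The exponent $41/24$ and the factor $\op{\Sigma^{-1}}^{3/2}$ come out of this bootstrap arithmetic (one power of $\op{\Sigma^{-1}}$ from the lemma, another half-power from the square root in the first pass), not from \cite[Proposition~4.3.2]{NP} as you suggest in Step~4. In short: fix the form of the key lemma and replace your ``solve a quadratic'' step by the two-pass bootstrap; as written, Step~3 does not close.
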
  

As anticipated, to prove Theorem \ref{t}, we shall combine the somewhat classical smoothing estimate (\ref{smoothing}) with a remarkable bound by Schulte and Yukich \cite{SY19}.

\subsection{Applications}\label{ss:introapp}

We illustrate the use of Theorem \ref{t} by developing two examples in full detail.  

\medskip

\noindent\underline{\it Quantitative fourth moment theorems}. A {\bf fourth moment theorem} (FMT) is a mathematical statement implying that a given sequence of centered and normalized random variables converges in distribution to a Gaussian limit, as soon as the corresponding sequence of fourth moments converges to 3 (that is, to the fourth moment of the standard Gaussian distribution). Distinguished examples of FMTs are e.g. de Jong's theorem for degenerate $U$-statistics (see \cite{dJ, dp}) as well as the CLTs for multiple Wiener-It\^o integrals proved in \cite{nupe, pt}; the reader is referred to the webpage \cite{W} for a list (composed of several hundreds of papers) of applications and extensions of such results, as well as to the lecture notes \cite{Z} for a modern discussion of their relevance in mathematical physics.  Our first application of Theorem \ref{t} is a quantitative multivariate fourth moment theorem for a vector of multiple Wiener-It\^o integrals, considerably extending the qualitative multivariate results proved in \cite{pt}. Note that such a result was already obtained by Nourdin and Rosi\'nski \cite[Theorem 4.3]{NR14} for the 1-Wasserstein distance $d_W$. Thanks to Theorem \ref{t}, it is not difficult to generalize their result to the $\dc$ metric. 

\begin{corollary}\label{c:1}
Fix $m\geq 1$ as well as $q_1,\ldots,q_m\geq 1$.
Let $\mathbf F = (F_1,...,F_m)$ where $F_i = I_{q_{i}}(f_{q_i})$ with $f_{q_i}\in {\frak H}^{\odot q_i}$.  Let $N_\Sigma$ be a centered Gaussian vector with covariance matrix $\Sigma = (\EE F_iF_j)_{i,j\in [m]}$ \textcolor{black}{supposed to be invertible}. Then
\begin{align*}
\dc(\mathbf F, N_\Sigma) \le 402 \Big(\op{\Sigma^{-1}}^{3/2}+1\Big) m^{41/24} \sqrt{\EE \norm{\mathbf F}^4 - \EE \norm{N_\Sigma}^4}.
\end{align*}
\end{corollary}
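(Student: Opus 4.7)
The plan is to invoke Theorem \ref{t} with the canonical choice $u_i = -DL^{-1}F_i$. Since $L$ acts on the $q$-th Wiener chaos by multiplication by $-q$, for $F_i = I_{q_i}(f_{q_i})$ one has $L^{-1}F_i = -q_i^{-1}F_i$, and hence $u_i = q_i^{-1}DF_i \in {\rm Dom}(\delta)$. Consequently,
$$
M_F(i,j) = \langle DF_i, u_j\rangle_{\HH} = \frac{1}{q_j}\langle DF_i,DF_j\rangle_{\HH}.
$$
Noting moreover that $\EE M_F(i,j) = \EE[F_i\,\delta(u_j)] = \EE[F_iF_j] = \Sigma(i,j)$ by the duality formula (\ref{duality}), the random matrix $M_F$ has deterministic mean $\Sigma$, and the quantity appearing in Theorem \ref{t} becomes
$$
\EE\bigl[\|M_F-\Sigma\|_{\rm H.S.}^2\bigr] = \sum_{i,j=1}^m \Var\bigl(q_j^{-1}\langle DF_i,DF_j\rangle_{\HH}\bigr).
$$

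Next, I would expand the right-hand side of Corollary \ref{c:1} by coordinates. Using the Isserlis/Wick identity $\EE[N_i^2 N_j^2] = \Sigma(i,i)\Sigma(j,j) + 2\Sigma(i,j)^2$, valid for any centered Gaussian pair, one decomposes
$$
\EE\|\mathbf F\|^4 - \EE\|N_\Sigma\|^4 = \sum_{i,j=1}^m\bigl(\EE[F_i^2 F_j^2] - \Sigma(i,i)\Sigma(j,j) - 2\Sigma(i,j)^2\bigr).
$$
Thus Corollary \ref{c:1} will follow directly from Theorem \ref{t} as soon as one can establish the entry-wise inequality
$$
\Var\bigl(q_j^{-1}\langle DF_i,DF_j\rangle_{\HH}\bigr) \le \EE[F_i^2 F_j^2] - \Sigma(i,i)\Sigma(j,j) - 2\Sigma(i,j)^2, \qquad 1\le i,j\le m,
$$
and then summed over $i,j$.

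The main obstacle is establishing this last inequality. It is however a well-documented consequence of the product formula for multiple Wiener-It\^o integrals together with a careful expansion into contractions of the tensors $f_{q_i}, f_{q_j}$. For the diagonal entries $i=j$ with $q_i = q$ it reduces to the classical identity of Nualart-Ortiz-Latorre, for which the multiplicative constant is $(q-1)/(3q)\le 1$; the off-diagonal and mixed-order entries (where necessarily $\Sigma(i,j)=0$ whenever $q_i\ne q_j$, by orthogonality of distinct chaoses) are treated in the same spirit and form precisely the $L^2$ variant of the key variance estimate behind \cite[Theorem 4.3]{NR14} (see also \cite[Theorem 6.2.2]{NP}). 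Combining the resulting bound $\EE\|M_F-\Sigma\|_{\rm H.S.}^2 \le \EE\|\mathbf F\|^4 - \EE\|N_\Sigma\|^4$ with Theorem \ref{t} then delivers the stated estimate on $\dc(\mathbf F, N_\Sigma)$, with exactly the same prefactor $402(\op{\Sigma^{-1}}^{3/2}+1)m^{41/24}$ as in the main theorem.
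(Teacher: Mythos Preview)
Your proposal is correct and follows essentially the same route as the paper: apply Theorem \ref{t} with $u_i=-DL^{-1}F_i=q_i^{-1}DF_i$, then invoke the entry-wise variance estimate from \cite[Proof of Theorem 4.3]{NR14}, which is exactly your displayed inequality $\Var(q_j^{-1}\langle DF_i,DF_j\rangle_{\HH})\le \Cov(F_i^2,F_j^2)-2\Sigma(i,j)^2$. The only difference is cosmetic: the paper cites \cite{NR14} directly for both the variance bound and the summation identity, whereas you unpack the Gaussian fourth-moment identity via Isserlis and the action of $L^{-1}$ on a fixed chaos explicitly.
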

In particular, for a vector ${\bf F}$ of multiple Wiener-It\^o integrals to be close in the convex distance to a centered Gaussian vector $N_\Sigma$ with matching covariance matrix, it is enough that $\EE\|{\bf F}\|^4\approx \EE \norm{N_\Sigma}^4$.

\medskip

\noindent\underline{\it The multivariate Breuer-Major theorem}. The second example concerns the convergence towards a Brownian motion occurring in the {\bf Breuer-Major theorem} proved in \cite{BM83}. Let us briefly recall this fundamental result (see \cite[Chapter 7]{NP} for an introduction to the subject, as well as \cite{nn, cnn} for recent advances in a functional setting).  Let $\{G_k : k\in\ZZ\}$ be a centered Gaussian stationary sequence with $\rho(j-k)=\EE[G_jG_k]$ and $\rho(0)=1$; in particular, $G_k\sim N(0,1)$ for all $k$. Let $\ph\in L^2(\RR,\ga)$ where $\ga(dx)=(2\pi)^{-1/2}e^{-x^2/2}dx$ denotes the standard Gaussian measure on $\R$.  Since the Hermite polynomials $\{H_k : k\ge 0\}$ form an orthonormal basis of  $L^2(\RR,\ga)$, one has 
\begin{align*}
\ph = \sum_{k\ge d} a_k H_k,
\end{align*}
with $d\in\NN$ and $a_d\neq 0$. The index $d$ is known as the Hermite rank of $\ph\in L^2(\RR,\ga)$.  Suppose in addition that $\int_\R \varphi d\gamma=\EE[\varphi(G_0)]=0$, that is, suppose $d\ge 1$. 
The Breuer-Major theorem \cite{BM83} states the following: if $\sum_{k\in\ZZ}|\rho(k)|^d<\infty$, then
\begin{align}\label{e:fdd}
\bra{\frac{1}{\sqrt{n}}\sum_{k=1}^{\lfloor nt\rfloor} \ph(G_k) : t\ge 0} \overset{f.d.d.}\longrightarrow  \bra{ \sigma W(t) : t\ge 0}
\end{align}
where $W$ is a standard Brownian motion, $\overset{f.d.d.}\longrightarrow$ indicates convergence in the sense of finite-dimensional distributions, and
\begin{align*}
\sigma^2 := \sum_{k\ge d}a_k^2 k! \sum_{j\in\ZZ} \rho(j)^k\in[0,\infty),
\end{align*}
(That $\sigma^2$ is a well-defined positive real number is part of the conclusion.)
We refer to our note \cite{NPY19} and references therein for results on the rate of convergence in the total variation distance for one-dimensional marginal distributions (that is, in dimension 1).  We intend to apply Theorem \ref{t} to address the rate of convergence for the following multivariate CLT implied by \eqref{e:fdd}: for every $0=t_0<t_1<...<t_m=T<\infty$, 
\begin{align*}
\left(\frac{1}{\sqrt{n}}\sum_{k=1}^{\lfloor nt_1\rfloor} \ph(G_k),..., \frac{1}{\sqrt{n}}\sum_{k=1}^{\lfloor nt_m\rfloor} \ph(G_k)\right) \overset{d}\longrightarrow N(0,\Sigma(t_1, ..., t_m))
\end{align*}
where $\overset{d}\longrightarrow$ indicates converges in distribution, and $N(0,\Sigma(t_1, ..., t_m))$ is a $m$-dimensional centred Gaussian vector with covariance $\Sigma(t_1, ..., t_m)$ having entries $\sigma^2 t_i\wedge t_j$, $i,j=1,...,m$. Notice that for any $m\times m$ invertible matrix $A$, 
\begin{align*}
\dc(\mathbf F, \mathbf G) = \dc(A\mathbf F, A\mathbf G).
\end{align*}
Therefore, choosing $A$ appropriately, it suffices to consider the vector $\mathbf F_n =(F_{n,1},...,F_{n,m})$ with
\begin{align*}
F_{n,i} = \frac{1}{\sqrt{n}}\sum_{k=\lfloor nt_{i-1}\rfloor + 1}^{\lfloor nt_i\rfloor} \ph(G_k), \quad \quad i\in[m]
\end{align*}
and obtain the rate of convergence for 
\begin{align}\label{e:glimit}
\mathbf F_n \overset{d}\longrightarrow  N(0, \sigma^2 \mathrm{Diag}(t_{1}-t_0,...,t_m-t_{m-1}))=:N_\Sigma.
\end{align}
The following result provides a quantitative version of this CLT with respect to the distance $\dc$.  Recall from \cite{NPY19} that the minimal regularity assumption over $\ph$ for obtaining rates of convergence via the Malliavin-Stein method is that $\ph\in\mathbb D^{1,4}(\R,\gamma)$, meaning that  $\ph$ is absolutely continuous and both $\ph$ and its derivative $\ph'$ belong to $L^4(\RR,\ga)$. We say that $\ph$ is 2-sparse if its expansion in Hermite polynomials does not have consecutive non-zero coefficients. In particular, even functions are 2-sparse.

\begin{corollary}\label{c:2} Let $\mathbf F_n$ and $N_\Sigma$ be given in  \eqref{e:glimit}. Suppose that $\ph\in\mathbb D^{1,4}(\R,\gamma)$ with Hermite rank $d\ge 1$. Then,
\begin{itemize}
\item[i)] There exists a constant $C$ depending only on $\ph, m, \Sigma$ such that for each $n\in\NN$, 
\begin{align*}
\dc(\mathbf F_n, N_\Sigma) \le C\sum_{i,j=1}^m |\Sigma(i,j) - \EE[F_iF_j]|+ Cn^{-\frac{1}{2}}  \left(\sum_{|k|< n} |\rho(k)|\right)^\frac{3}{2}. 
\end{align*}
\item[ii)] If $d=2$, $\ph$ is 2-sparse and $b\in[1,2]$, then there exists a constant $C$ depending only on $\ph, m, \Sigma$ such that for each $n\in\NN$, 
\begin{align*}
\dc(\mathbf F_n, N_\Sigma)\le C\sum_{i,j=1}^m |\Sigma(i,j) - \EE[F_iF_j]|+ C n^{-(\frac{1}{b}-\frac{1}{2})} \left( \sum_{|k|<n}|\rho(k)|^2\right)^{\frac{1}{2}} \left( \sum_{|k|< n}|\rho(k)|^b\right)^{\frac{1}{b}} .
\end{align*}
\item[iii)] If $d=2$, $\ph$ is 2-sparse, and $\sum_{k\in\ZZ}|\rho(k)|^2<\infty$, then as $n\to\infty$,
\begin{align*}
\dc(\mathbf F_n, N_\Sigma)\to 0. 
\end{align*}
\end{itemize}
\end{corollary}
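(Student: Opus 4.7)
\textbf{Proof plan for Corollary~\ref{c:2}.} My plan is to apply Theorem~\ref{t} with the Malliavin representation $F_{n,i}=\delta(u_{n,i})$, where $u_{n,i}=-DL^{-1}F_{n,i}$. Since $m$, $\Sigma$ and $\ph$ are held fixed, the prefactor in Theorem~\ref{t} collapses into a constant $C=C(\ph,m,\Sigma)$, and the task reduces to estimating $\sqrt{\EE\|M_{F_n}-\Sigma\|_{\rm H.S.}^2}$. I split this by the triangle inequality in $L^2$:
\begin{equation*}
\sqrt{\EE\|M_{F_n}-\Sigma\|_{\rm H.S.}^2}\le\|\EE M_{F_n}-\Sigma\|_{\rm H.S.}+\sqrt{\sum_{i,j=1}^{m}\Var(M_{F_n}(i,j))}.
\end{equation*}
By \eqref{duality} combined with \eqref{e:relation}, $\EE M_{F_n}(i,j)=\EE[F_{n,i}F_{n,j}]$, so the first (deterministic) term is dominated by $\sum_{i,j}|\Sigma(i,j)-\EE[F_{n,i}F_{n,j}]|$, which is exactly the ``covariance gap'' appearing in items (i) and (ii). For item (iii), standard one-dimensional Breuer--Major covariance computations yield that this bias vanishes under $\sum_k|\rho(k)|^2<\infty$, using also that the blocks $I_{n,i}:=\{\lfloor nt_{i-1}\rfloor+1,\ldots,\lfloor nt_i\rfloor\}$ are pairwise disjoint and separated by gaps of order $n$.

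The core task is then to control the variance part. Writing $G_k=X(e_k)$ with $\langle e_k,e_l\rangle_{\HH}=\rho(k-l)$ and applying the chain rule \eqref{chainrule}, one obtains
\begin{equation*}
M_{F_n}(i,j)=\frac{1}{n}\sum_{k\in I_{n,i}}\sum_{l\in I_{n,j}}\ph'(G_k)\,\widetilde\ph(G_l)\,\rho(k-l),\qquad\widetilde\ph(x):=\sum_{q\ge d}a_q H_{q-1}(x).
\end{equation*}
Up to the restriction of summation indices to the disjoint blocks $I_{n,i},I_{n,j}$, this matches exactly the quantity whose variance is analysed in the one-dimensional Breuer--Major setting of \cite{NPY19}: one expands $\ph'(G_k)\widetilde\ph(G_l)$ in Wiener chaos via the product formula, computes the $L^2$ norm chaos-by-chaos, and reduces to a four-index sum of products of $|\rho|$-values along the edges of a $4$-cycle on $I_{n,i}\cup I_{n,j}$.

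For part (i), a Cauchy--Schwarz/diagram bound on this 4-cycle sum, valid for any Hermite rank $d\ge 1$, yields $\Var(M_{F_n}(i,j))\le Cn^{-1}(\sum_{|k|<n}|\rho(k)|)^3$, giving the claimed rate after taking square roots. For part (ii), the $d=2$, $2$-sparse hypothesis ensures that only even-indexed chaoses appear in $\ph'\cdot\widetilde\ph$, so each surviving contribution to the variance factors as a pure product of two $\rho$-contractions; Young's convolution inequality applied with the free exponent $b\in[1,2]$ then delivers $\Var(M_{F_n}(i,j))\le Cn^{-2(1/b-1/2)}(\sum|\rho|^2)(\sum|\rho|^b)^{2/b}$. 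Part (iii) is the most delicate, since the bound from (ii) with $b=2$ is of order $\sum_{|k|<n}|\rho(k)|^2$ which is only \emph{finite}, not vanishing, under the hypothesis; I would instead work directly with the chaos expansion, dominate each 4-cycle term by $|\rho(k-l)|^2|\rho(k'-l')|^2+|\rho(k-k')|^2|\rho(l-l')|^2$ via AM--GM, and use the fact that $\sum_{|k|\ge K}|\rho(k)|^2\to 0$ as $K\to\infty$ through a truncation argument to conclude $\Var(M_{F_n}(i,j))\to 0$. The main obstacle I anticipate is precisely this chaos-bookkeeping in parts (ii)--(iii), where the 2-sparse assumption must be used to discard all ``mixed-chaos'' contributions to the variance; once the one-dimensional reductions from \cite{NPY19} are in place, the multivariate extension is essentially notational, because the blocks $I_{n,i}$ interact only through the kernel $\rho$.
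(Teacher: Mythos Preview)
Your overall architecture is correct and coincides with the paper's: apply Theorem~\ref{t} with $u_{n,i}=-DL^{-1}F_{n,i}$, split off the covariance bias, and reduce the variance of $\langle DF_{n,i},u_{n,j}\rangle_{\HH}$ to the same four-index sum treated in \cite{NPY19}. Note that your canonical choice $-DL^{-1}F_{n,i}$ is \emph{exactly} the paper's explicit $u_{n,i}=n^{-1/2}\sum_{k\in I_{n,i}}\ph_1(G_k)e_k$ (your $\widetilde\ph$ is their $\ph_1$), so there is no real difference at that step; the paper then enlarges each block sum to $\{1,\dots,n\}$ and the multivariate statement becomes, as you say, a notational variant of the one-dimensional one.

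Two points deserve correction. First, the tool used in \cite{NPY19} (and invoked verbatim by the paper) to control $\Cov(\ph'(G_k)\ph_1(G_\ell),\ph'(G_{k'})\ph_1(G_{\ell'}))$ is \emph{Gebelein's inequality}, not a chaos/product-formula expansion. After multiplication by $|\rho(k-\ell)\rho(k'-\ell')|$ this produces \emph{path}-type terms $|\rho(i-j)\rho(j-k)\rho(k-\ell)|$ in item (i) and $|\rho(i-j)\rho(j-k)^2\rho(k-\ell)|$ in the $2$-sparse item (ii), rather than the $4$-cycles you describe. The Young-inequality manipulations for (i)--(ii) are then applied to these path sums (written as convolutions $\rho_n*\rho_n*\rho_n$ and $\rho_n*1_n$ against $\rho_n*\rho_n^2$), not to cycle sums.

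Second, and more substantively, your AM--GM route for item (iii) does not close. Bounding a term by $|\rho(k-l)|^2|\rho(k'-l')|^2+|\rho(k-k')|^2|\rho(l-l')|^2$ decouples $(k,l)$ from $(k',l')$; summing over $\{1,\dots,n\}^4$ and dividing by $n^2$ leaves $\big(\sum_{|m|<n}|\rho(m)|^2\big)^2$, which is merely bounded, and no subsequent truncation can recover decay because the cross-structure has been discarded. The paper instead keeps the path form, bounds the $2$-sparse sum by $n^{-1}\|\rho_n*1_n\|_{\ell^2(\ZZ)}\|\rho_n\|_{\ell^2(\ZZ)}^3$, and only \emph{then} truncates: writing $\rho_n=\wh\rho_N+\wt\rho_N$ with $\wh\rho_N$ supported on $|k|\le N$, Young's inequality gives
\[
n^{-1}\|\rho_n*1_n\|_{\ell^2(\ZZ)}\ \le\ \Big(\sum_{|k|\ge N}\rho(k)^2\Big)^{1/2}+(2N+1)\,n^{-1/2},
\]
which vanishes upon sending $n\to\infty$ and then $N\to\infty$. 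Your truncation instinct is right, but it must act on $\rho_n$ inside the convolution, before any symmetrisation that kills the $n^{-1}$ gain.
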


The rest of the note is organized as follows. The proof of Theorem \ref{t} is given in Section \ref{pf:t}, Corollary \ref{c:1} in Section \ref{pf:c1}, Corollary \ref{c:2} in Section \ref{pf:c2}. We use $C$ to denote a generic constant whose value may change from line to line. 

\subsection{Acknowledgments}

\textcolor{black}{We thank Simon Campese and Nicola Turchi for pointing out an error in an earlier version}. 
I. Nourdin is supported by the FNR grant APOGee (R-AGR-3585-10) at Luxembourg University; G. Peccati is supported by the FNR grant FoRGES (R-AGR-3376-10) at Luxembourg University; X. Yang was supported by the FNR Grant MISSILe (R-AGR-3410-12-Z) at Luxembourg and Singapore Universities.

\section{Proofs} 
 
 \subsection{Proof of Theorem \ref{t}}\label{pf:t}
 We divide the proof into several steps.  
 
 \medskip
\noindent\underline{\textit{Step 1 (smoothing)}}. For any bounded and measurable $h$ and $t\in(0,1)$, recall its mollification at level $\sqrt{t}$ from (\ref{e:h_t}).
Then it is plain that $h_t$ is $C^\infty$ with bounded derivatives of all orders and  the solution to \eqref{e:SE} with $h=h_t$ is given by
\begin{align*}
f_t(\mathbf x) := -\frac{1}{2}\int_t^1 \frac{1}{1-s} (\EE[h(\sqrt{s}N_\Sigma+\sqrt{1-s}\mathbf x)]-\EE[h(N_\Sigma)]) ds,
\end{align*}
see \cite[p.12]{SY19}.
Finally, recall from e.g. \cite[Lemma 2.2]{SY19} that, for any $t\in(0,1)$,
\begin{align*}
\dc(\mathbf F,N_\Sigma) \le \frac{4}{3} \sup_{h\in\mathcal{I}_m} |\EE h_t(\mathbf F) -\EE h_t(N_\Sigma) | + \frac{20m}{\sqrt{2}}\frac{\sqrt{t}}{1-t}.
\end{align*}

 \noindent{\underline{\textit{Step 2 (integration by parts)}}.
 An integration by parts by \eqref{duality} and \eqref{chainrule} 
 (see \cite[Chapter 4]{NP} for more details), together with Cauchy-Schwarz's inequality, implies,  
 \begin{align}\label{e:line1}
|\EE h_{t}(\mathbf F) - \EE h_{t}(N_\Sigma)| &=  \left|\EE \sum_{i,j=1}^d \Sigma(i,j) \partial^2_{ij} f_t({\bf F})  - \EE \sum_{k=1}^d F_k \partial_k f_t(\mathbf F)\right | \\
&= \left|\EE \sum_{i,j=1}^d (\Sigma(i,j) - \langle DF_i, u_j\rangle) \partial^2_{ij} f({\bf F})\right| \nonumber\\
&\le \sqrt{\sum_{i,j=1}^d \EE[(\Sigma(i,j)-\langle DF_i, u_j\rangle)^2]} \sqrt{\sum_{i,j=1}^d \EE[\partial_{ij}^2f(\mathbf F)^2]}.\nonumber
\end{align}
The following remarkable estimate is due to M. Schulte and J. Yukich.
\begin{lemma}[Proposition 2.3 in \cite{SY19}]\label{l:SY2.4}Let $\mathbf{Y}$ be an $\RR^m$-valued random vector and $\Sigma$ be an invertible $m\times m$ covariance matrix. Then,
\begin{align*}
\sup_{h\in\mathcal I_m} \EE \sum_{i,j=1}^m |\partial^2_{ij} f_t(\mathbf Y)|^2 \le \op{\Sigma^{-1}}^2 \Big(m^2 (\log t)^2  \dc(\mathbf Y,N_\Sigma)+ 530 m^{17/6}\Big).
\end{align*}
where the left-hand side depends on $h$ through the function $f_t$ solving Stein's equation with test function $h_t$ given by \eqref{e:h_t}. 
\end{lemma}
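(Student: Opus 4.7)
The plan is to derive an explicit formula for the Hessian of $f_t$, decompose its pointwise squared Frobenius norm into a ``bulk'' piece (polynomially bounded in $m$, independent of $t$) and a ``boundary'' piece (of order $(\log t)^2$ but supported on a thin convex tube around $\partial K$, where $h=\mathbf{1}_K$), and finally transfer that boundary piece from $\mathbf{Y}$ to $N_\Sigma$ via the defining property of $\dc$.

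\emph{Step 1 (Hessian formula).} Differentiating the explicit representation of $f_t$ twice in $\mathbf x$ and shifting both derivatives from $\mathbf x$ onto the Gaussian integrator via Stein's identity (crucially removing the derivatives on the rough function $h$) yields, with $Z_s(\mathbf x) := \sqrt s\,N_\Sigma + \sqrt{1-s}\,\mathbf x$,
\[
\partial^2_{ij}f_t(\mathbf x) = -\tfrac12\int_t^1 \tfrac{1}{s}\,\EE\!\left[h(Z_s(\mathbf x))\bigl\{(\Sigma^{-1}N_\Sigma)_i(\Sigma^{-1}N_\Sigma)_j - (\Sigma^{-1})_{ij}\bigr\}\right] ds.
\]
The bracketed quadratic form is centered under $N_\Sigma$, which allows us to replace $h(Z_s)$ by $h(Z_s)-p_s(\mathbf x)$, where $p_s(\mathbf x):=\PP(Z_s(\mathbf x)\in K)$.

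\emph{Step 2 (localization at $\partial K$).} Cauchy--Schwarz inside the expectation produces a factor $\sqrt{p_s(\mathbf x)(1-p_s(\mathbf x))}$, which is effectively zero unless $\mathbf x$ lies within $O(\sqrt s)$ of $\partial K$ in the Mahalanobis metric of $\Sigma$. Combining this with the dimensional bound $\sum_{i,j}\bigl[(\Sigma^{-1})_{ii}(\Sigma^{-1})_{jj}+(\Sigma^{-1})_{ij}^2\bigr]\le 2m^2\op{\Sigma^{-1}}^2$ and Cauchy--Schwarz in $s$, I aim to establish a pointwise bound
\[
\sum_{i,j=1}^m|\partial^2_{ij}f_t(\mathbf x)|^2 \le \op{\Sigma^{-1}}^2\Bigl(m^2(\log t)^2\,\mathbf{1}_{B_t}(\mathbf x) + C\,m^{17/6}\Bigr),
\]
where $B_t:=\{\mathbf x:\mathrm{dist}_\Sigma(\mathbf x,\partial K)\le c\sqrt t\}$. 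The key geometric input is that, since $K$ is convex, $B_t=K_+\setminus K_-$ with $K_\pm$ the convex outer $c\sqrt t$-dilate and inner $c\sqrt t$-erosion of $K$; consequently $\mathbf{1}_{B_t}=\mathbf{1}_{K_+}-\mathbf{1}_{K_-}$ is a difference of indicators of sets in $\mathcal I_m$. The bulk constant $Cm^{17/6}$ arises after optimizing the off-tube Cauchy--Schwarz estimate and invoking sharp Ball/Nazarov anti-concentration for convex bodies under $N_\Sigma$.

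\emph{Step 3 (transfer via $\dc$).} Taking expectation under $\mathbf Y$ and applying \eqref{e:cd} to each of the convex sets $K_\pm$ gives
\[
\PP(\mathbf Y\in B_t) \le \PP(N_\Sigma\in B_t) + 2\dc(\mathbf Y,N_\Sigma),
\]
and Ball's theorem yields $\PP(N_\Sigma\in B_t)\le C'm^{1/4}\sqrt t$. Since $(\log t)^2\sqrt t=O(1)$ on $(0,1]$, the Gaussian-measure contribution to the boundary term is absorbed into the bulk $Cm^{17/6}$ constant, and the dependence on $\mathbf Y$ reduces cleanly to $\dc(\mathbf Y,N_\Sigma)$. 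Taking the supremum over $h\in\mathcal I_m$ then delivers the announced bound, with careful constant tracking giving the explicit factor $530$.

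\emph{Main obstacle.} The crux is Step 2: one must engineer a pointwise Hessian estimate whose large-Hessian set $B_t$ is simultaneously a geometrically thin tube (hence of small Gaussian measure), a difference of two convex sets (so that $\dc$ applies cleanly), and such that the off-tube residual has sharp dimensional behaviour. Balancing the tube radius $c\sqrt t$ against the $(\log t)^2$ blow-up inside $B_t$ and the $m^{1/4}\sqrt t$ surface-measure estimate of Ball, and then tracking numerical constants through the various Cauchy--Schwarz applications, is what ultimately pins down both the exponent $17/6$ and the explicit constant $530$; once Step 2 is secured, the transfer in Step 3 is a one-line application of the definition of $\dc$.
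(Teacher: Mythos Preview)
The paper does not prove this lemma: it is quoted as Proposition~2.3 of Schulte--Yukich \cite{SY19} and used as a black box in the proof of Theorem~\ref{t}. There is therefore no proof in the present paper to compare your proposal against.

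On its own merits, your outline correctly isolates the recursive mechanism (Step~3): the $\dc$ term enters by transferring the law of $\mathbf Y$ to that of $N_\Sigma$ on convex sets. However, the pointwise bound you target in Step~2 is not achievable in the form stated, and this is a genuine gap rather than a missing detail. Consider the one-dimensional case $\Sigma=1$, $K=(-\infty,0]$: the Cauchy--Schwarz factor $\sqrt{p_s(x)(1-p_s(x))}$ is of order $1$ whenever $|x|\le\sqrt s$, so after integrating $s^{-1}\,ds$ over $s\in[t,1]$ the second derivative at a point $x$ with $|x|=2c\sqrt t$ (i.e.\ just \emph{outside} your tube $B_t$) is still of order $\int_{4c^2t}^{1} s^{-1}\,ds \sim |\log t|$, not $O(1)$. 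In other words, the Hessian does not drop from $|\log t|$ to a $t$-free constant across a sharp threshold at radius $c\sqrt t$; it decays only like $|\log(\mathrm{dist}(\mathbf x,\partial K))|$. Hence $\sum_{i,j}|\partial^2_{ij}f_t(\mathbf x)|^2$ cannot be dominated pointwise by $m^2(\log t)^2\mathbf 1_{B_t}(\mathbf x)+Cm^{17/6}$, and the clean transfer of Step~3 via a single pair of convex sets $K_\pm$ breaks down. A workable argument must either take expectations from the outset --- bounding $\EE[p_s(\mathbf Y)(1-p_s(\mathbf Y))]$ scale by scale via $\dc$ and only then integrating in $s$ --- or pass through a multi-scale family of nested tubes; either route requires substantially more than what you have sketched.
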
   
\begin{remark}
 Lemma \ref{l:SY2.4} improves upon the uniform bound (see \cite{CGS} or \cite{SY19})
\begin{align*}
 |\partial^2_{ij} f_t(\mathbf x) | &\le C(m,\Sigma) \norm{h}_\infty |\log t|,
 \end{align*} 
  when some a priori estimate on $\dc(\mathbf Y,N_\Sigma)$ is available.
\end{remark}
\noindent As consequence,
 \begin{align*}
 &|\EE h_{t}(\mathbf F) - \EE h_{t}(N_\Sigma)|   \\
 \leq & \op{\Sigma^{-1}} \Big(m |\log t| \dc(\mathbf F,N_\Sigma)^{1/2}+ 24 m^{17/12}\Big) 
  \sqrt{\sum_{i,j=1}^d \EE[(\Sigma(i,j)-\langle DF_i, u_j\rangle)^2]}.
 \end{align*}
 Letting 
 \begin{align*}
 \kappa&= \dc(\mathbf F,N_\Sigma),\\
 \ga &= \sqrt{\sum_{i,j=1}^d \EE[(\Sigma(i,j)-\langle DF_i, u_j\rangle)^2]},
 \end{align*}
 we have thus established 
 \begin{align}\label{e:kappa}
 \kappa \le  \frac{4}{3} \op{\Sigma^{-1}} (m|\log t| \sqrt{\kappa} + 24 m^{17/12}) \ga + \frac{20m}{\sqrt{2}}\frac{\sqrt{t}}{1-t}.
 \end{align}
 
 \noindent{\underline{\textit{Step 3 (exploiting the recursive inequality)}}. 
\textcolor{black}{If $\ga\ge 1/e$, then the bound we intend to prove holds trivially (observe that $\dc(\mathbf F,N_\Sigma)\le 1$ by definition). 
Without loss of generality we can and will therefore assume that $\ga \le 1/e$.}
Let $t=\ga^2$. Using the fact that $\kappa\le 1$ for the $\kappa$ on the right-hand side of the \eqref{e:kappa}, one has
 \begin{align*}
 \kappa \le \frac{4}{3}\op{\Sigma^{-1}}(2m|\log \ga| + 24m^{17/12})\ga + 20\sqrt{2}m \ga. 
\end{align*}  
Therefore,
\begin{align*}
|\log \ga| \sqrt{\kappa} & \le \op{\Sigma^{-1}}^{1/2} \sqrt{\frac{8m}{3}} \ga^{1/2} | \log \ga|^{3/2} + \left( \op{\Sigma^{-1}} 32m^{17/12} + 20\sqrt{2} m\right)^{1/2} \ga^{1/2} | \log\ga|.
\end{align*}
Since $\sup_{x\in(0, 1/e]} x^{1/2}| \log x |^{3/2}\le 4$, one has
\begin{align*}
|\log \ga| \sqrt{\kappa} &\le \frac{8\sqrt{6}}{3} \op{\Sigma^{-1}}^{1/2} m^{1/2} + 16\sqrt{2} \op{\Sigma^{-1}}^{1/2} m^{17/24} + 8\sqrt{10} m^{1/2} \\
&\le 58\left(\op{\Sigma^{-1}}^{1/2}+1\right) m^{17/24}.
\end{align*}
Hence, putting the estimate back into \eqref{e:kappa} with $t=\ga^2$ gives
\begin{align*}
\kappa&\le \frac{4}{3}\op{\Sigma^{-1}}\left( 2m\Big( 58(\op{\Sigma^{-1}}^{1/2}+1) m^{17/24}  \Big) + 24 m^{17/12} \right) \ga + 20\sqrt{2}m\ga \\
&\le \Bigg( \frac{4}{3}\times 140\times 2\Big(\op{\Sigma^{-1}}^{3/2}+1\Big) m^{41/24} + 20\sqrt{2}m\Bigg) \ga \\
&\le 402 \Big(\op{\Sigma^{-1}}^{3/2}+1\Big) m^{41/24} \ga. 
\end{align*}
The proof is complete. 

\subsection{Proof of Corollary \ref{c:1}}\label{pf:c1}

We will obtain the desired conclusion as a direct application of Theorem \ref{t} with $u_i=-DL^{-1}F_i$, see (\ref{e:relation}). 
Indeed, recall that by Step 2 of \cite[Proof of Theorem 4.3]{NR14}, for any $i,j\in[m]$, 
\begin{align*}
\EE[(\EE[F_iF_j] - \langle DF_i, -DL^{-1}F_j\rangle )^2]\le \Cov(F_i^2,F_j^2) - 2\EE [F_i F_j]^2.
\end{align*}
On the other hand, Step 3 of \cite[Proof of Theorem 4.3]{NR14} shows that
\begin{align*}
\sum_{i,j=1}^m \Cov(F_i^2,F_j^2) - 2\EE [F_i F_j]^2 = \EE \norm{\mathbf F}^4  -\EE \norm{N_\Sigma}^4. 
\end{align*}
Plugging these estimates  into Theorem \ref{t} gives the result. 

\subsection{Proof of Corollary \ref{c:2}}\label{pf:c2}
We follow closely the arguments of \cite{NPY19} and assume without loss of generality that $T=1$. First, one can embed the Gaussian sequence in the statement in an isonormal Gaussian process $\{X(h) : h\in\frak H \}$, in such a way that 
\begin{align*}
\bra{G_k : k\in\ZZ} \overset{d}= \bra{ X(e_k) : k\in\ZZ  },
\end{align*}
for some appropriate family $\{e_k\}\subset \frak H$ verifying $\langle e_j , e_k \rangle_{\frak H} = \rho(k-j)$ for all $j,k$. For $\ph=\sum_{\ell\ge d}a_\ell H_\ell\in L^2(\R,\gamma)$, we define the shift mapping $\ph_1 := \sum_{\ell\ge 1}a_\ell H_{\ell-1}$ and set
\begin{align*}
u_{n,i} := \frac{1}{\sqrt{n}} \sum_{m=\lfloor nt_{i-1} \rfloor+1}^{\lfloor nt_i \rfloor} \ph_1(G_m) e_m, \quad i\in[m].
\end{align*}
Then, standard computations using \eqref{e:relation}  \textcolor{black}{lead to}
\begin{align}\label{e:key}
\de(u_{n,i}) =F_{n,i}.
\end{align}
Applying Theorem \ref{t} and the triangle inequality implies that
\begin{align*}
\dc(\mathbf F, N_\Sigma) \le C \sum_{i,j=1}^m |\Sigma(i,j) - \EE[F_iF_j]| + C \sqrt{ \sum_{i,j=1}^m \Var(\langle DF_{n,i}, u_{n,j}\rangle )}=: I_1+I_2.
\end{align*}
Note that, by the chain rule and the relation $D(G_k) =e_k$, 
\begin{align*}
\langle DF_{n,i}, u_{n,j}\rangle_\frak{H} = \frac{1}{ n} \sum_{k\sim t_i}\sum_{\ell \sim t_j } \ph'(G_k)\ph_1(G_\ell) \rho(k-\ell),
\end{align*}
where $k\sim t_i$ means that the sum is taken over $k\in \{ \lfloor nt_{i-1}\rfloor +1,..., \lfloor nt_i\rfloor \}$, and similarly for the symbol $\ell\sim t_j$. Hence,
\begin{eqnarray}
\label{e:THE_variance}
&&\Var(\langle DF_{n,i}, u_{n,j}\rangle_\frak{H})\notag\\
& =&\!\!\! \frac{1}{n^2} \sum_{k\sim t_i}\sum_{\ell \sim t_j }\sum_{k'\sim t_i}\sum_{\ell' \sim t_j }  \Cov(\ph'(X_k)\ph_1(X_\ell), \ph'(X_{k'})\ph_1(X_{\ell'}))\rho(k-\ell)\rho(k'-\ell')\notag\\
&\le & \frac{1}{n^2}  \sum_{k,k',\ell,\ell'=1}^{n} \textcolor{black}{\Big|} \Cov(\ph'(X_k)\ph_1(X_\ell), \ph'(X_{k'})\ph_1(X_{\ell'}))\rho(k-\ell)\rho(k'-\ell')\textcolor{black}{\Big|} .\\
\notag
\end{eqnarray}
The variance is bounded because of the assumption that $\ph\in\mathbb D^{1,4}$. Once \eqref{e:THE_variance} is in place, one can apply Gebelein's inequality as in \cite{NPY19}. In particular, one infers that (see \cite[Proposition 3.4]{NPY19})
\begin{align*}
\dc(\mathbf F_n, N_\Sigma) \le C\sqrt{\frac{1}{n^2} \sum_{i,j,k,\ell=0}^{n-1} 
\bigg| \rho(j-k) \rho(i-j)\rho(k-\ell)\bigg|}. 
\end{align*}
If, in addition, $\ph$ is $2$-sparse, then
\begin{align*}
\dc(\mathbf F_n, N_\Sigma) \le C\sqrt{\frac{1}{n^2} \sum_{i,j,k,\ell=0}^{n-1}  \bigg|\rho(j-k)^2 \rho(i-j)\rho(k-\ell)\bigg| }.
\end{align*}
Items i)-ii) now follow from these inequalities, as shown in \cite{NPY19}; we include a  proof for completeness.  
Applying twice Young's inequality for convolutions, one has, \textcolor{black}{with $\rho_n(k)=|\rho(k)|{1}_{|k|<n}$},
\begin{eqnarray*}
\sum_{i,j,k,\ell=0}^{n-1}  \bigg|\rho(j-k) \rho(i-j)\rho(k-\ell)\bigg|  &\le& \sum_{i,\ell=0}^{n-1} \big(\rho_n*\rho_n*\rho_n\big)(i-\ell)\\ 
&\le& n \norm{\rho_n*\rho_n*\rho_n}_{\ell^1(\ZZ)} \le n \norm{\rho_n}^3_{\ell^1(\ZZ)}, 
\end{eqnarray*}
yielding  Item i).  Rewrite the sum of products as a sum of the product of convolutions by introducing the function 
$1_n(k):= 1_{|k|<n}$. We have
\begin{align*}
&\sum_{i,j,k,\ell=0}^{n-1}  |\rho(j-k)^2 \rho(i-j)\rho(k-\ell)| \\
 &= \sum_{i,j,k,\ell=0}^{n-1}  |\rho(j-k)^2 \rho(i-j)\rho(k-\ell) 1_n(\ell-i)| \\ 
&\textcolor{black}{\le} \sum_{j,\ell=0}^{n-1}  (\rho_n* 1_n)(\ell-j) (\rho_n * \rho_n^2)(\ell-j) \le n \langle \rho_n*1_n, \rho_n*\rho^2_n\rangle_{\ell^2(\ZZ)}.
\end{align*}
For $b\in[1,2]$, we have
\begin{align}\label{e:11}
\langle \rho_n*1_n, \rho_n*\rho^2_n\rangle_{\ell^2(\ZZ)} &\le \norm{\rho_n*1_n}_{\ell^{b\over {b-1}}(\ZZ)} \norm{\rho_n*\rho^2_n}_{\ell^b(\ZZ)} \\
&\le \norm{\rho_n}_{\textcolor{black}{\ell^{b}(\ZZ)}} \norm{1_n}_{\ell^{\frac{b}{2b-2}}(\ZZ)} \norm{\rho_n}_{\ell^{b}(\ZZ)} \norm{\rho_n^2}_{\ell^{1}(\ZZ)} \notag\\
&\textcolor{black}{\le (2n)^{\frac{2b-2}{b}}} \norm{\rho_n^2}_{\ell^{1}(\ZZ)} \norm{\rho_n}^2_{\ell^b(\ZZ)},\notag
\end{align}
yielding Item ii). Now we move to the proof of Item iii). Notice that taking $b=2$ for the right-hand side of \eqref{e:11}, together with an application of Young's inequality,  yields that
\begin{align*}
\langle \rho_n*1_n, \rho_n*\rho^2_n\rangle_{\ell^2(\ZZ)} \le \norm{\rho_n*1_n}_{\ell^{2}(\ZZ)} \norm{\rho_n}^3_{\ell^2(\ZZ)}.
\end{align*}
Thus, 
\begin{align*}
\frac{1}{n^2} \sum_{i,j,k,\ell=0}^{n-1}  \bigg|\rho(j-k)^2 \rho(i-j)\rho(k-\ell)\bigg|  \le \frac{1}{n} \norm{\rho_n*1_n}_{\ell^{2}(\ZZ)}\norm{\rho_n}^3_{\ell^2(\ZZ)}. 
\end{align*}
To proceed, we handle the convolution involving $1_n$ a bit differently. Set
\begin{align*}
\wt \rho_n(k) &= \rho(k) 1_{N\le |k|<n},\\
\wh\rho_n(k) &=  \rho(k) 1_{|k|\le N}
\end{align*}
so that $\rho_n=\wt \rho_n + \wh\rho_n$. One has
\begin{align*}
\frac{1}{n} \norm{\rho_n*1_n}_{\ell^{2}(\ZZ)} &\le \frac{1}{n}\norm{\wt \rho_n}_{\ell^2(\ZZ)} \norm{1_n}_{\ell^1(\ZZ)} +  \frac{1}{n} \norm{\wh\rho_n}_{\ell^1(\ZZ)} \norm{1_n}_{\ell^2(\ZZ)} \\
&\le \Big(\sum_{N\le |k|< n} \rho(k)^2\Big)^{1/2} + (2N+1)n^{-1/2},
\end{align*}
from which Item iii) follows. The proof is complete.

\appendix
{\blue
\section{Proof and discussion of relation \eqref{e:conwass}}\label{s:conwass}

Inequality \eqref{e:conwass} is a direct consequence of the following statement, whose proof exploits a strategy already adopted in \cite[Proof of Theorem 3.1]{app}. 

\begin{p}\label{p:conwass} Fix $m\geq 1$, and let $N_\Sigma$ denote a $m$-dimensional centered Gaussian vector with invertible covariance matrix $\Sigma$. Then, for any $m$-dimensional random vector ${\bf F}$ one has that 
\begin{equation}\label{e:ineqq}
d_c({\bf F} , N_{\Sigma})\leq 2 \sqrt{2} \, \Gamma(\Sigma)^{1/2} \, d_W({\bf F} , N_{\Sigma})^{1/2},
\end{equation}
where $\Gamma(\Sigma)$ is the isoperimetric constant defined by
$$
\Gamma(\Sigma) := \sup_{Q, \epsilon>0} \frac{ \mathbb{P}(N_\Sigma \in Q^\epsilon) - \mathbb{P}(N_\Sigma \in Q)} {\epsilon} ,
$$
where $Q$ ranges over all Borel measurable convex subsets of $\R^m$, and $Q^{\epsilon}$ indicates the set of all elements of $\R^m$ whose Euclidean distance from $Q$ does not exceed $\epsilon$.
\end{p}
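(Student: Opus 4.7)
My plan is to prove \eqref{e:ineqq} via a Lipschitz smoothing of the indicator $\mathbf{1}_Q$, combined with the definitions of $d_W$ and $\Gamma(\Sigma)$. Fix a measurable convex set $Q \subseteq \R^m$ and a parameter $\epsilon > 0$, and introduce the $(1/\epsilon)$-Lipschitz functions
\[
h_\epsilon(x) := \max\!\left(0,\, 1 - \tfrac{d(x, Q)}{\epsilon}\right), \qquad g_\epsilon(x) := \max\!\left(0,\, 1 - \tfrac{d(x, Q^{-\epsilon})}{\epsilon}\right),
\]
where $Q^{-\epsilon} := \{x \in Q : B(x, \epsilon) \subseteq Q\}$ is the (convex) inner parallel set of $Q$. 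Convexity of $Q$ yields the sandwich inequalities $\mathbf{1}_Q \leq h_\epsilon \leq \mathbf{1}_{Q^\epsilon}$ and $\mathbf{1}_{Q^{-\epsilon}} \leq g_\epsilon \leq \mathbf{1}_Q$, the latter using the elementary inclusion $(Q^{-\epsilon})^\epsilon \subseteq Q$.

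Three inputs then combine. First, a standard mollification turns $h_\epsilon$ and $g_\epsilon$ into $C^1$ functions with Lipschitz constant arbitrarily close to $1/\epsilon$, so that \eqref{e:defwass} yields $|\EE h_\epsilon({\bf F}) - \EE h_\epsilon(N_\Sigma)| \leq d_W({\bf F},N_\Sigma)/\epsilon$, and likewise for $g_\epsilon$. Second, applying the definition of $\Gamma(\Sigma)$ to both $Q$ and the convex set $Q^{-\epsilon}$, together with $Q \subseteq (Q^{-\epsilon})^\epsilon$, gives
\[
\PP(N_\Sigma \in Q^\epsilon) - \PP(N_\Sigma \in Q) \leq \Gamma(\Sigma) \epsilon, \qquad \PP(N_\Sigma \in Q) - \PP(N_\Sigma \in Q^{-\epsilon}) \leq \Gamma(\Sigma) \epsilon.
\]
Third, the sandwich inequalities translate expectations of $h_\epsilon, g_\epsilon$ into the probabilities of interest. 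Chaining these ingredients yields
\[
|\PP({\bf F} \in Q) - \PP(N_\Sigma \in Q)| \leq \Gamma(\Sigma)\,\epsilon + \frac{d_W({\bf F}, N_\Sigma)}{\epsilon}
\]
uniformly in convex $Q$, and \eqref{e:ineqq} follows by taking the supremum over $Q$ and minimising in $\epsilon > 0$.

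The main obstacle is the degenerate case $Q^{-\epsilon} = \emptyset$, in which $g_\epsilon \equiv 0$ and the argument just outlined for the lower direction $\PP(N_\Sigma \in Q) - \PP({\bf F} \in Q)$ breaks down. In that case, convexity forces $Q$ to lie in a slab of width at most $2\epsilon$ around some affine hyperplane, so $\PP(N_\Sigma \in Q)$ is controlled directly by the one-dimensional Gaussian density in the direction normal to that hyperplane; this is where the $\sqrt{2}$ refinement of the naive constant $2$ from the optimisation in $\epsilon$ enters. The only other technicality is the mollification required to pass from genuine Lipschitz to $C^1$-Lipschitz functions as in \eqref{e:defwass}, which is routine.
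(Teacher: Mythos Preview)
Your route via $1/\epsilon$-Lipschitz sandwiches $g_\epsilon \le \mathbf 1_Q \le h_\epsilon$ is a genuine alternative to the paper's argument, which instead invokes an optimal coupling $\EE\|{\bf F}-N_\Sigma\|_{\R^m}=d_W({\bf F},N_\Sigma)$ and compares $\PP[{\bf F}\in Q]$ directly with $\PP[N_\Sigma\in Q^\epsilon]$ and $\PP[N_\Sigma\in Q^{-2\epsilon}]$ via Markov's inequality on the coupling distance. Both strategies reduce to the same two-term bound (a $\Gamma(\Sigma)$-term plus a $d_W/\epsilon$-term), and both ultimately need to control an inner difference $\PP(N_\Sigma\in Q)-\PP(N_\Sigma\in Q^{-\delta})$ by $\Gamma(\Sigma)\delta$.

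The gap in your argument is the justification of that inner bound. The inclusion $Q\subseteq (Q^{-\epsilon})^\epsilon$ you invoke is \emph{false} for general convex $Q$: with $Q=[0,1]^2$ and $\epsilon=0.4$ one has $Q^{-\epsilon}=[0.4,0.6]^2$, and the corner $(0,0)\in Q$ sits at distance $0.4\sqrt 2>\epsilon$ from $Q^{-\epsilon}$. The inequality $\PP(N_\Sigma\in Q)-\PP(N_\Sigma\in Q^{-\epsilon})\le\Gamma(\Sigma)\epsilon$ is nonetheless correct, but it needs a different argument --- e.g.\ layering $Q\setminus Q^{-\epsilon}$ along the convex inner parallel bodies $Q^{-t}$, $t\in[0,\epsilon]$, each of which has Gaussian perimeter at most $\Gamma(\Sigma)$. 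Relatedly, your handling of the case $Q^{-\epsilon}=\emptyset$ is off: emptiness of $Q^{-\epsilon}$ only says the inradius of $Q$ is below $\epsilon$, which does \emph{not} force $Q$ into a slab of width $\le 2\epsilon$ (an equilateral triangle has inradius one third of its minimal width). Once the inner bound is established correctly it already covers this case, and your optimisation in $\epsilon$ then yields the constant $2$ --- stronger than, hence implying, the stated $2\sqrt 2$.
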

\begin{remark}{\rm In \cite{nazarov} it is proved that, for some absolute constants $0<c<C<\infty$, 
$$
c\sqrt{\|\Sigma\|_{\rm H.S.} } \leq \Gamma(\Sigma) \leq C\sqrt{\|\Sigma\|_{\rm H.S.} },
$$
where $\|\cdot \|_{\rm H.S.}$ stands as above for the Hilbert-Schmidt norm. When $\Sigma= I_m$ (identity matrix), one has also the well-known estimate $\Gamma(I_m) \leq 4m^{1/4}$ (see \cite{ball}), as well as Nazarov's upper and lower bounds
$$
 e^{-5/4} \leq \liminf_m \frac{\Gamma(I_m)}{m ^{1/4}}\leq \limsup_m \frac{\Gamma(I_m)}{m^{1/4}}\leq (2\pi)^{-1/4} < 0.64,
$$ 
see \cite[p. 170]{nazarov}. In \cite[Theorem 1.2]{R19}, it is proved that Nazarov's upper bound can be reduced from 0.64 to 0.59; see also \cite{bentkus} for related computations in the framework of the multivariate CLT.
}\end{remark}

\begin{proof}[Proof of Proposition \ref{p:conwass}] We can assume that ${\bf F}$ and $N_{\Sigma}$ are defined on a common probability space, and that $\mathbb{E} \| {\bf F} - N_{\Sigma}\|_{\R^m} = d_W({\bf F}, N_{\Sigma})$. Fix a convex set $Q$, as well as $\epsilon >0$. We have that 
\begin{eqnarray*}
\mathbb{P}[ {\bf F} \in Q ] - \mathbb{P}[ {N_\Sigma}   \in Q]   &\leq&  \mathbb{P}[ {\bf F} \in Q,  \| {\bf F} - N_{\Sigma}\|_{\R^m} \leq \epsilon ] - \mathbb{P}[N_\Sigma \in Q]  + \epsilon^{-1} \mathbb{E}[ \| {\bf F} - N_{\Sigma}\|_{\R^m}]  \\
&\leq &  \mathbb{P}[ N_{\Sigma} \in Q^{\epsilon} ] - \mathbb{P}[N_\Sigma \in Q] + \epsilon^{-1} d_W ({\bf F}, N_{\Sigma})\\
&\leq & \Gamma(\Sigma)\epsilon + \epsilon^{-1} d_W ({\bf F}, N_{\Sigma}).
\end{eqnarray*}
On the other hand, defining $Q^{-\epsilon}$ as the set of those $y\in Q$ such that the closed ball with radius $\epsilon$ centered at $y$  is contained in $Q$,
\begin{eqnarray*}
\mathbb{P}[ N_\Sigma \in Q ] - \mathbb{P} [{\bf F}\in Q]   &\leq&  \mathbb{P}[ N_\Sigma \in Q,  \| {\bf F} - N_{\Sigma}\|_{\R^m} \leq \epsilon ] - \mathbb{P}[{\bf F} \in Q^{-\epsilon} ]  + \epsilon^{-1} \mathbb{E}[ \| {\bf F} - N_{\Sigma}\|] 
\\
&\leq & \Gamma(\Sigma)2\epsilon + \epsilon^{-1} d_W ({\bf F}, N_{\Sigma}),
\end{eqnarray*}
where we have used the inequality
$$
\mathbb{P}[ N_\Sigma \in Q,  \| {\bf F} - N_{\Sigma}\|_{\R^m} \leq \epsilon ] - \mathbb{P}[{\bf F} \in Q^{-\epsilon} ] \leq \mathbb{P} [N_\Sigma \in Q] -\mathbb{P} [N_\Sigma \in Q^{-2\epsilon} ].   
$$
The conclusion follows from a standard optimisation in $\epsilon$.

\end{proof}

\begin{remark}{\rm Fix $m\geq 1$, and let $\mathscr{R}_m$ be the collection of all hyper-rectangles of the type $R = (-\infty, t_1]\times \cdots \times (-\infty, t_m]$. In \cite[Theorem 3.1]{app} it is proved that, if $N$ is a $m$-dimensional centered Gaussian vector with identity covariance matrix and ${\bf F}$ is any $m$-dimensional random vector, then
\begin{equation}\label{e:kd}
\sup_{R\in \mathscr{R}_m}  \big| \mathbb{P}[ {\bf F} \in R ] - \mathbb{P}[ {N}   \in R] \big| \leq 3  ( \log m ) ^{1/4}  d_W({\bf F} , N)^{1/2}.
\end{equation}
The left-hand side of the previous inequality is usually referred to as the {\bf Kolmogorov distance} between the distributions of ${\bf F}$ and $N$. The presence of the factor $(\log m)^{1/4}$ is consistent with the fact that, for the standard Gaussian measure on $\RR^m$, the isoperimetric constant associated with all hyper-rectangles of $\RR^m$ is bounded from above by $\sqrt{\log m}$, see \cite{ball, nazarov}. An estimate analogous to \eqref{e:kd} is established by different methods in \cite[Corollary 3.1]{koike}.

}
\end{remark}

}

\bibliographystyle{plain}

\end{document}